\newtheorem{theorem}{Theorem}[section]
\theoremstyle{definition}
\newtheorem{example}[theorem]{Example}
\newtheorem{remark}[theorem]{Remark}
\newenvironment{proofqniform}{\noindent {\bf Proof of Theorem \ref{q-niform}:}}{\qed \par}
\newcommand{\excise}[1]{}
\renewcommand{\dim}{\operatorname{dim}}
\newcommand{\rk}{\operatorname{rk}}
\newcommand{\crk}{\operatorname{crk}}
\renewcommand{\and}{\qquad\text{and}\qquad}
\newcommand{\Ind}{\operatorname{Ind}}
\newcommand{\triv}{\operatorname{triv}}
\newcommand{\Z}{\mathbb{Z}}
\newcommand{\C}{\mathbb{C}}
\newcommand{\Fq}{\mathbb{F}_q}
\newcommand{\Fqb}{\overline{\mathbb{F}}_q}
\newcommand{\Ql}{\mathbb{Q}_\ell}
\newcommand{\la}{\lambda}
\newcommand{\OS}{OS}
\newcommand{\gm}{\mathbb{G}_m}
\newcommand{\Gn}{\operatorname{GL}_n(q)}
\newcommand{\Gk}{\operatorname{GL}_k(q)}
\newcommand{\Gnk}{\operatorname{GL}_{n-k}(q)}
\newcommand{\Bn}{\operatorname{B}_n(q)}
\newcommand{\Bk}{\operatorname{B}_k(q)}
\newcommand{\Bnk}{\operatorname{B}_{n-k}(q)}
\newcommand{\bOS}{\overline{\OS}}
\renewcommand{\P}{\mathbb{P}}
\newcommand{\Gm}{\mathbb{G}_m}
\begin{document}
\spacing{1.2}
\noindent{\Large\bf Equivariant Kazhdan-Lusztig polynomials of \boldmath{$q$}-niform matroids}\\

\noindent{\bf Nicholas Proudfoot}\\
Department of Mathematics, University of Oregon,
Eugene, OR 97403\\
njp@uoregon.edu\\

{\small
\begin{quote}
\noindent {\em Abstract.} 
We study $q$-analogues of uniform matroids, which we call $q$-niform matroids.
While uniform matroids admit actions of symmetric groups, $q$-niform matroids admit actions
of finite general linear groups.  We show that the equivariant Kazhdan-Lusztig polynomial
of a $q$-niform matroid is the unipotent $q$-analogue of the equivariant Kazhdan-Lusztig polynomial
of the corresponding uniform matroid, thus providing evidence for the positivity conjecture for equivariant
Kazhdan-Lusztig polynomials.
\end{quote} }

\section{Introduction}
For any matroid $M$, the {\bf Kazhdan-Lusztig polynomial} $P_M(t) \in \Z[t]$ was introduced in \cite{EPW}.
In the case where the matroid $M$ admits the action of a finite group $W$, one can define the {\bf equivariant Kazhdan-Lusztig polynomial}
$P_M^W(t)$ \cite{GPY}; this is a polynomial whose coefficients are virtual representations of $W$ (in characteristic zero)
with dimensions equal to the coefficients of $P_M(t)$.

Though these polynomials admit elementary recursive definitions, there are not many families of matroids for which explicit formulas
are known.  Non-equivariant formulas exist for thagomizer matroids \cite{thag} and fan, wheel, and whirl matroids \cite{fan-wheel-whirl}.
Kazhdan-Lusztig polynomials of braid matroids have been studied extensively, both in the equivariant \cite{fs-braid}
and non-equivariant \cite{Karn-Wakefield} settings, though no simple formulas have been obtained.

The most interesting explicit formulas that we have are for uniform matroids.  Let $U_{n,m}$ be the uniform matroid
of rank $n-m$ on a set of cardinality $n$, which admits an action of the symmetric group $S_n$.  
For any partition $\la$ of $n$, let $V[\la]$ be the associated irreducible representation of $S_n$.
The following theorem was
proved in \cite[Theorem 3.1]{GPY}; an independent proof of the non-equivariant statement was later given in 
\cite[Theorem 1.2]{Chinese-uniform}.

\begin{theorem}\label{uniform}
Let $C_{n,m}^i$ be the coefficient of $t^i$ in the $S_n$-equivariant Kazhdan-Lusztig polynomial of $U_{n,m}$,
and let $c_{n,m}^i := \dim C_{n,m}^i$ be the corresponding non-equivariant coefficient.
\begin{itemize}
\item $C_{n,m}^0 = V[n]$, and for all $i>0$, $$C_{n,m}^i = \sum_{b=1}^{\min(m,n-m-2i)} V[n-2i-b+1,b+1,2^{i-1}].$$
\item $c_{n,m}^0 = 1$, and for all $i>0$, $$c_{n,m}^i = 
\sum_{b=1}^{\min(m,n-m-2i)} \frac{(n - 2i - 2b + 1)n!}{(n-i-b)(n-i-b+1)(i + b)(i + b - 1)(n-2i-b)!(b - 1)!i!(i - 1)!}.$$
\end{itemize}
\end{theorem}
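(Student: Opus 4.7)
The plan is to verify both parts of the theorem by induction on $n$, using the characterization of the equivariant Kazhdan-Lusztig polynomial as the unique assignment satisfying: (a) $P_M^W = 1$ when $\rk(M) = 0$, (b) $\deg P_M^W < \rk(M)/2$, and (c) the palindromic recursion of \cite{EPW, GPY} expressing $P_M^W(t)$ in terms of the OS algebras of localizations and the equivariant KL polynomials of contractions, summed over orbit representatives of proper nonempty flats.

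To set up the induction, observe that the proper nonempty flats of $U_{n,m}$ are precisely the subsets $F \subseteq \{1, \ldots, n\}$ of size $k$ with $1 \le k \le n-m-1$, and they form a single $S_n$-orbit with stabilizer $S_k \times S_{n-k}$. The localization at such an $F$ is the Boolean matroid of rank $k$, whose OS algebra decomposes as an explicit sum of $S_k$-irreducibles (wedge powers of the permutation representation). The contraction is again a uniform matroid $U_{n-k, m}$, to which the inductive hypothesis applies. Substituting the proposed formula for $P^{S_{n-k}}_{U_{n-k, m}}(t)$ into the recursion and computing $\Ind_{S_k \times S_{n-k}}^{S_n}$ via Pieri's rule then expresses the right-hand side as an explicit sum of irreducibles $V[\la]$ with $\la \vdash n$.

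The main technical step is to verify that this sum equals $t^{n-m} P_{U_{n,m}}^{S_n}(1/t) - P_{U_{n,m}}^{S_n}(t)$ when $P_{U_{n,m}}^{S_n}$ is given by the claimed formula. This reduces to a combinatorial identity on multiplicities of partitions of shape $[n-2i-b+1, b+1, 2^{i-1}]$: one must enumerate which such shapes arise as horizontal-strip attachments to smaller shapes of the same form, and verify that the alternating contributions over $k$ and $b$ telescope correctly into the palindromic expression on the left. I expect this identity to be the main obstacle, as it requires careful bookkeeping of the Pieri coefficients against the index range $1 \le b \le \min(m, n-m-2i)$; the degree bound $i < (n-m)/2$ itself falls out automatically from the constraint $b \le n-m-2i$ together with $b \ge 1$.

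The non-equivariant statement about $c_{n,m}^i$ is then a direct consequence of the equivariant one: applying the hook length formula to each $V[n-2i-b+1, b+1, 2^{i-1}]$ and summing over $b$ yields the stated rational expression for $\dim C_{n,m}^i$.
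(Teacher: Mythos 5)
The paper does not contain a proof of Theorem \ref{uniform}. As the sentence immediately preceding the statement explains, the theorem is quoted from Gedeon--Proudfoot--Young \cite{GPY} (their Theorem 3.1), with an independent non-equivariant proof in \cite{Chinese-uniform}; here it is used purely as a black-box input to the proof of Theorem \ref{q-niform}. So there is no in-paper argument to compare yours against.

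That said, your plan is the natural one and is the route one would expect: the flats of $U_{n,m}$ of a given cardinality form a single $S_n$-orbit with stabilizer $S_k \times S_{n-k}$, localizations are Boolean, contractions are $U_{n-k,m}$, and one verifies the defining recursion by induction on $n$ with $m$ fixed. Three cautions if you want to promote this from a plan to a proof. (i) You restrict attention to ``proper nonempty flats,'' but $t^{\rk M}P_M(t^{-1}) - P_M(t) = \sum_{F \ne \emptyset}\chi_{M_F}(t)\,P_{M^F}(t)$ also receives a contribution from the maximal flat $F = E$; in the paper this appears as the separate term $(-1)^i\OS^i_{n-m}$ in Equation \eqref{uniform-recursion}, and it must not be dropped. (ii) You invoke Pieri's rule, but the graded pieces of the Boolean Orlik--Solomon algebra are sums of hooks, $\OS^j_{k,0} \cong V[k-j,1^j] \oplus V[k-j+1,1^{j-1}]$, not single rows or columns; inducing these against shapes of the form $[n-2i-b+1,\,b+1,\,2^{i-1}]$ requires the Littlewood--Richardson rule (or at least an iterated Pieri argument), which is tractable but genuinely more delicate than Pieri. (iii) Most importantly, you have correctly located the load-bearing step --- the alternating telescoping of Littlewood--Richardson multiplicities over $k$, $j$, and $b$ --- but you explicitly leave it unverified. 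That identity is the entire content of the theorem, so as written the proposal is an outline of a strategy rather than a proof.
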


The purpose of this note is to obtain a $q$-analogue of Theorem \ref{uniform}.
Let $q$ be a prime power, and let $U_{n,0}(q)$ be the rank $n$ matroid associated with
the collection of all hyperplanes in the vector space $\Fq^n$, which we regard as the $q$-analogue
of the Boolean matroid of rank $n$.  For any natural number $m\leq n$,
let $U_{n,m}(q)$ be the truncation of $U_{n,0}(q)$ to rank $n-m$.
More concretely, a basis for $U_{n,m}(q)$ is a set of $n-m$ hyperplanes whose intersection has dimension $m$.
The matroid $U_{n,m}(q)$ is the $q$-analogue of the uniform matroid $U_{n,m}$,
and we will therefore refer to it as a {\bf \boldmath{$q$}-niform matroid}.
This matroid was also studied in \cite{HRS}, where the authors computed the Hilbert series of its Chow ring.
The $q$-niform matroid $U_{n,m}(q)$ admits a natural action of the group $\Gn$ of invertible $n\times n$
matrices with coefficients in $\Fq$, which is the $q$-analogue of $S_n$.

The representation theory of $\Gn$ is much more complicated than the representation theory of $S_n$.
However, there is a certain subset of irreducible representations of $\Gn$, known as {\bf irreducible unipotent representations},
that correspond bijectively to the irreducible representations of $S_n$.  
For any partition $\la$ of $n$, let $V(q)[\la]$ be the associated irreducible unipotent representation of $\Gn$.
For any positive integer $k$, we use the standard notation 
$$[k]_q := 1 + q + \cdots + q^{k-1}\and [k]_q! := [k]_q[k-1]_q\cdots[1]_q.$$
The following theorem, which is our main result, says that the equivariant Kazhdan-Lusztig coefficients of $U_{n,m}(q)$
are precisely the unipotent $q$-analogues of the equivariant Kazhdan-Lusztig coefficients of $U_{n,m}$.

\begin{theorem}\label{q-niform}
Let $C_{n,m}^i(q)$ be the coefficient of $t^i$ in the $\Gn$-equivariant Kazhdan-Lusztig polynomial of $U_{n,m}(q)$,
and let $c_{n,m}^i(q) := \dim C_{n,m}^i(q)$ be the corresponding non-equivariant coefficient.
\begin{itemize}
\item $C_{n,m}^0(q) = V(q)[n]$, and for all $i>0$, $$C_{n,m}^i(q) = \sum_{b=1}^{\min(m,n-m-2i)} V(q)[n-2i-b+1,b+1,2^{i-1}].$$
\item $c_{n,m}^0(q) = 1$, and for all $i>0$, $c_{n,m}^i(q)$ is equal to
$$\sum_{b=1}^{\min(m,n-m-2i)} 
\frac{q^{b-1+i(i+1)}\;[n - 2i - 2b + 1]_q[n]_q!}{[n-i-b]_q[n-i-b+1]_q[i + b]_q[i + b - 1]_q[n-2i-b]_q![b - 1]_q![i]_q![i - 1]_q!}.$$
\end{itemize}
\end{theorem}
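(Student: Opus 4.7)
The plan is to show that $P^{\Gn}_{U_{n,m}(q)}(t)$ is, under the standard bijection $V[\lambda] \leftrightarrow V(q)[\lambda]$ between irreducible representations of $S_n$ and irreducible unipotent representations of $\Gn$, literally the image of $P^{S_n}_{U_{n,m}}(t)$, so that Theorem~\ref{uniform} transports verbatim. The backbone of the argument is that all ingredients of the equivariant Kazhdan--Lusztig recursion for $U_{n,m}(q)$ --- the lattice of flats, the stabilizers, the localizations, the contractions, and the induction from stabilizers to $\Gn$ --- match the corresponding ingredients for $U_{n,m}$ under the dictionary in which symmetric group induction from $S_{n-k} \times S_k$ becomes Harish-Chandra induction from the Levi $\Gnk \times \Gk$, both decomposing by the same Littlewood--Richardson rule.

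First I would identify the flat structure. A proper rank-$k$ flat of $U_{n,m}(q)$ (for $0 < k < n-m$) is the set of all hyperplanes of $\Fq^n$ containing a fixed subspace $W$ of dimension $n-k$; these flats form a single $\Gn$-orbit, with stabilizer the maximal parabolic $P_{n-k,k}$ whose Levi quotient is $\Gnk \times \Gk$. The localization at such a flat is the rank-$k$ $q$-analogue Boolean matroid $U_{k,0}(q)$ associated with $\Fq^n/W$, whose reduced characteristic polynomial is $\prod_{j=0}^{k-1}(t-q^j)$, precisely the $q$-analogue of $\prod_{j=0}^{k-1}(t-j)$ for $U_{k,0}$. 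The contraction simplifies to $U_{n-k,m}(q)$ on the hyperplanes of $W$. With these identifications I would apply the equivariant Kazhdan--Lusztig recursion of \cite{GPY} and induct on $n$: the constant term is forced to be the trivial representation $V(q)[n]$, and the inductive step expresses $P^{\Gn}_{U_{n,m}(q)}(t)$ as a sum over $0 < k < n-m$ of Harish-Chandra inductions from $\Gnk \times \Gk$ of products involving the characteristic polynomial of the localization and the equivariant Kazhdan--Lusztig polynomial of $U_{n-k,m}(q)$. By the inductive hypothesis and the closure of unipotent representations under Harish-Chandra induction, every contribution is unipotent, and the decomposition into irreducibles matches --- term by term --- the analogous recursion for $U_{n,m}$ under the bijection, forcing the first bullet of Theorem~\ref{q-niform}.

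The main technical obstacle is to carry out this transport dictionary cleanly: I would need to verify that the unipotent radical of $P_{n-k,k}$ acts trivially on the representations being induced (so that $\Ind$ from $P_{n-k,k}$ genuinely collapses to Harish-Chandra induction from $\Gnk \times \Gk$); that the $q^k$-fold parallel multiplicity in the contraction does not affect the equivariant polynomial, using both the classical invariance of the Kazhdan--Lusztig polynomial under simplification and the fact that the parabolic permutes each parallel class transitively; and that Harish-Chandra induction of unipotent characters from $\Gnk \times \Gk$ decomposes with the same Littlewood--Richardson multiplicities as $\Ind_{S_{n-k} \times S_k}^{S_n}$ of the corresponding symmetric group representations, which is classical in the theory of unipotent characters of finite general linear groups. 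Once the equivariant formula is established, the dimension formula for $c^i_{n,m}(q)$ follows by applying the $q$-hook-length formula to $\dim V(q)[n-2i-b+1, b+1, 2^{i-1}]$ and simplifying, in direct parallel to the derivation of the second bullet of Theorem~\ref{uniform} from the first.
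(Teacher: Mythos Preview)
Your plan is the paper's: write the equivariant Kazhdan--Lusztig recursion for $U_{n,m}(q)$, check that each ingredient (orbits of flats, parabolic stabilizers, localizations $U_{k,0}(q)$, contractions $U_{n-k,m}(q)$, Harish-Chandra induction) is the unipotent $q$-analogue of the corresponding ingredient for $U_{n,m}$, and transport Theorem~\ref{uniform} via the Comparison Theorem, with the dimension formula then following from the $q$-hook-length formula. One slip to fix: the characteristic polynomial of the Boolean matroid $U_{k,0}$ is $(t-1)^k$, not $\prod_{j=0}^{k-1}(t-j)$, so the non-equivariant characteristic polynomials are \emph{not} matched by the naive substitution you suggest; what actually carries the argument is the equivariant statement $\bOS^{\,i}_{k,0}(q)\cong V(q)[k-i,1^i]$ (a Deligne--Lusztig computation the paper cites explicitly) matching $\bOS^{\,i}_{k,0}\cong V[k-i,1^i]$, and you should invoke this rather than the characteristic-polynomial remark. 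Your care about the $q^k$-fold parallel classes in the contraction is a detail the paper elides.
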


\begin{remark}
For any matroid $M$, the coefficients of $P_M(t)$ are conjectured to be non-negative \cite[Conjecture 2.3]{EPW}.
More generally, the coefficients of $P_M^W(t)$ are conjectured to be honest (rather than virtual) representations of $W$
\cite[Conjecture 2.13]{GPY}.  These conjectures are proved when $M$ is realizable \cite[Theorem 3.10]{EPW} 
(respectively equivariantly realizable \cite[Corollary 2.12]{GPY}), but no proof exists in the general case.
The matroid $U_{n,m}$ is always realizable, but it is not equivariantly realizable unless $m\in\{0,1,n-1,n\}$
(of these, only the $m=1$ case yields nontrivial Kazhdan-Lusztig coefficients).
Similarly, the matroid $U_{n,m}(q)$ is always realizable, but it is typically not equivariantly realizable.
Thus Theorems \ref{uniform} and \ref{q-niform} both provide significant evidence for the equivariant
non-negativity conjecture.
\end{remark}

\begin{remark}
Theorem \ref{uniform} implies that $\{C_{n,m}^i \mid n\geq m\}$ admits the structure of a finitely
generated FI-module \cite[Theorem 1.13]{CEF}, 
while Theorem \ref{q-niform} implies that $\{C_{n,m}^i(q) \mid n\geq m\}$ admits the structure of a finitely
generated VI-module \cite[Theorem 1.6]{GanW}.  
In order to define these structures in a natural way, we would need need to be able to define $C_{n,m}^i$
and $C_{n,m}^i(q)$ as actual vector spaces rather than as isomorphism classes of vector spaces.
The matroid $U_{n,1}$ is equivariantly realizable, which means that we have a cohomological interpretation of $C_{n,1}^i$, 
and we obtain a canonical $\operatorname{FI^{op}}$-module structure from \cite[Theorem 3.3(1)]{fs-braid};
dualizing then gives a canonical finitely generated FI-module.
In joint work with Braden, Huh, Matherne, and Wang, the author is working to construct a canonical vector space 
isomorphic to the coefficient of $t^i$ in $P_M(t)$ for any matroid $M$.  When this goal is achieved, we believe
that this construction will induce a canonical $\operatorname{FI^{op}}$-module structure on $\{C_{n,m}^i \mid n\geq m\}$
and a canonical $\operatorname{VI^{op}}$-module structure on $\{C_{n,m}^i(q) \mid n\geq m\}$, each with finitely generated duals.
\end{remark}

Our proof of Theorem \ref{q-niform} relies heavily on Theorem \ref{uniform} along with the Comparison Theorem
(Theorem \ref{properties}), which roughly says that calculations involving Harish-Chandra induction of 
unipotent representations of finite general linear groups are essentially equivalent to the analogous calculations 
for symmetric groups.  
The only additional ingredients in the proof are to check that the Orlik-Solomon algebra
of $U_{n,m}(q)$  is the unipotent $q$-analogue
of the Orlik-Solomon algebra of $U_{n,m}$ (Example \ref{truncation})
and that the recursive formula for $C_{n,m}^i(q)$ is essentially the same as the recursive formula for $C_{n,m}^i$
(Equations \eqref{uniform-recursion} and \eqref{q-niform-recursion}).

\vspace{\baselineskip}
\noindent
{\em Acknowledgments:}
The author is indebted to June Huh for help with formulating the main result and to Olivier Dudas for 
help with proving it.  The author is supported by NSF grant DMS-1565036.  

\section{Unipotent representations and the Comparison Theorem}\label{sec:comparison}
Given a pair of natural numbers $k\leq n$ and a pair of representations $V$ of $S_k$ and $V'$ of $S_{n-k}$,
we define
$$V*V' := \Ind_{S_k\times S_{n-k}}^{S_n}\Big(V\boxtimes V'\Big).$$
Irreducible representations of the symmetric group $S_n$ are classified by partitions of $n$.
Given a partition $\la$, let $V[\la]$ be the associated representation.  
For each cell $(i,j)$ in the Young diagram for $\la$, let $h_\la(i,j)$ be the corresponding hook length;
then the dimension of $V[\la]$ is equal to $$\frac{n!}{\prod h_\la(i,j)}.$$

We now review some analogous statements and constructions in the representation theory of finite general linear groups.
Given a pair of natural numbers $k\leq n$, let $P_{k,n}(q)\subset\Gn$ denote 
the parabolic subgroup associated with the Levi $\Gk\times\Gnk$.
Given a pair of representations $V(q)$ of $\Gk$ and $V'(q)$ of $\Gnk$,
we obtain a representation $V(q)\boxtimes V'(q)$ of $\Gk\times\Gnk$, and we may interpret this as a representation of $P_{k,n}(q)$
via the natural surjection $P_{k,n}(q)\to \Gk\times\Gnk$.  We then define
$$V(q)*V'(q) := \Ind_{P_{k,n}(q)}^{\Gn}\Big(V(q)\boxtimes V'(q)\Big).$$
This operation is called {\bf Harish-Chandra induction}.

Let $\Bn\subset \Gn$ be the subgroup of upper triangular matrices.  An irreducible representation
of $\Gn$ is called {\bf unipotent} if it appears as a direct summand of the representation
$$\C\big[\Gn/\Bn\big] =
\Ind_{\Bn}^{\Gn}\!\big(\triv_{\Gn}\big).$$
An arbitrary representation is called unipotent if it is isomorphic to a direct sum of irreducible unipotent representations.

\begin{theorem}\label{properties}
Let $q$ be a prime power and $n$ a natural number.
\begin{enumerate}
\item Irreducible unipotent representations of $\Gn$ are in canonical bijection with partitions of $n$.
\item The irreducible unipotent representation $V(q)[\la]$ associated with the partition $\la$ has dimension
$$q^{\sum (k-1)\la_k}\frac{[n]_q!}{\prod [h_\la(i,j)]_q}.$$
\item If $k\leq n$, $V(q)$ is a unipotent representation of $\Gk$, and $V'(q)$ is a unipotent representation of $\Gnk$,
then $V(q)*V'(q)$ is a unipotent representation of $\Gn$.
\item Let $\la$, $\mu$, and $\nu$ be partitions of $n$, $k$, and $n-k$, respectively.
The multiplicity of $V(q)[\la]$ in $V(q)[\mu]*V(q)[\nu]$ is equal to the multiplicity of $V[\la]$ in $V[\mu] * V[\nu]$.
\end{enumerate}
\end{theorem}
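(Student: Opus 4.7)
The plan is to package all four parts through the Iwahori--Hecke algebra associated to the Borel subgroup. Let $M_n := \C[\Gn/\Bn]$ and $H_n(q) := \operatorname{End}_{\Gn}(M_n)$. By Iwahori's theorem, $H_n(q)$ is the generic Hecke algebra of type $A_{n-1}$ at parameter $q$; Tits' deformation theorem then shows that $H_n(q)$ is semisimple over $\C$ with irreducibles $E_\la$ canonically indexed by partitions $\la \vdash n$, matching the irreducibles of $\C[S_n]$.

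For (1), the double-centralizer theorem applied to the $(\Gn, H_n(q))$-bimodule $M_n$ yields a decomposition $M_n = \bigoplus_\la V(q)[\la] \otimes E_\la$. Since every irreducible unipotent representation is by definition a summand of $M_n$, this produces the claimed bijection between partitions of $n$ and irreducible unipotent representations. For (2), one reads $\dim V(q)[\la]$ off as the classical generic degree of the corresponding principal-series character: the factor $q^{\sum(k-1)\la_k}$ is the $a$-invariant of the unipotent character and $[n]_q!/\prod [h_\la(i,j)]_q$ is the generic degree of the Hecke algebra irreducible, both standard (see, e.g., Carter's \emph{Finite Groups of Lie Type} or Geck--Pfeiffer).

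For (3), use transitivity of Harish-Chandra induction together with the isomorphism $M_k * M_{n-k} \cong M_n$: Harish-Chandra inducing the trivial character from $\Bk \times \Bnk \subseteq P_{k,n}(q)$ up to $\Gn$ agrees with inducing directly from $\Bn$. Hence any $V(q) \subseteq M_k$ and $V'(q) \subseteq M_{n-k}$ give $V(q) * V'(q) \subseteq M_n$, which is a sum of irreducible unipotent representations.

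The main obstacle is (4), which is the content of the Howlett--Lehrer comparison theorem. The required input is that Harish-Chandra induction between unipotent representations corresponds, via the double-centralizer identification and Tits' deformation, to ordinary induction from $\C[S_k] \otimes \C[S_{n-k}]$ to $\C[S_n]$. Concretely, the multiplicity of $V(q)[\la]$ in $V(q)[\mu] * V(q)[\nu]$ equals the multiplicity of $E_\la$ in the Hecke-algebra induction $\operatorname{Ind}_{H_k(q) \otimes H_{n-k}(q)}^{H_n(q)}(E_\mu \boxtimes E_\nu)$, which under Tits' deformation is the multiplicity of $V[\la]$ in $V[\mu] * V[\nu]$. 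Establishing this compatibility is the nontrivial representation-theoretic ingredient; in practice one cites Howlett--Lehrer rather than reproving it.
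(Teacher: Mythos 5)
Your proposal is correct and follows essentially the same route as the paper: the paper proves (3) by the same isomorphism $\C[\Gk/\Bk] * \C[\Gnk/\Bnk] \cong \C[\Gn/\Bn]$ you use, and it handles (1), (2), and (4) by citing the literature (Curtis, Benson--Curtis, Dipper--James, and Howlett--Lehrer), while its Remark 2.2 sketches precisely the Hecke-algebra/double-centralizer mechanism you spell out. Your write-up is a somewhat more expanded version of that citation-based argument, but the key technical inputs --- Iwahori's theorem, Tits' deformation, generic degrees, and the Howlett--Lehrer comparison theorem --- are the same results the paper invokes.
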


\begin{proof}
Statements 1 and 4 appear in \cite[Theorem B]{Curtis75}.
The fact that the dimension of $V(q)[\la]$ is polynomial in $q$ appears in \cite[Theorem 2.6]{BensonCurtis}.
For an explicit calculation of this polynomial, see \cite[Equation (1.1)]{DipperJames}.
Finally, Statement 3 follows from the fact that $\C\big[\Gk/\Bk\big] * \C\big[\Gnk/\Bnk\big] \cong \C\big[\Gn/\Bn\big]$.
\end{proof}

\begin{remark}
The standard proof of Theorem \ref{properties}(1) is very far from constructive.
One proves that the endomorphism algebra of $\C\big[\Gn/\Bn\big]$ is isomorphic to the Hecke algebra
of $S_n$; this implies that the irreducible constituents of $\C\big[\Gn/\Bn\big]$ are in canonical bijection with irreducible
modules over the Hecke algebra, which are in turn in canonical bijection with irreducible representations of $S_n$.
However, a recent paper of Andrews \cite{Andrews} gives a construction of $V(q)[\la]$ modeled on tableaux, which
is analogous to the usual construction of $V[\la]$.
\end{remark}

\begin{remark}
A generalization of Statement 4 due to Howlett and Lehrer \cite[Theorem 5.9]{Howlett-Lehrer} is commonly referred to  
as the Comparison Theorem.  For the purposes of this paper, we will use this terminology to refer to the entirety of Theorem \ref{properties}.
\end{remark}

\section{Orlik-Solomon algebras}
For any matroid $M$ on the ground set $E$, let $\OS_{\! M}^*$ be the {\bf Orlik-Solomon algebra} of $M$, and let
$$\chi_M(t) := \sum_{i=0}^{\rk M} (-1)^i \dim \OS_{\! M}^{i} t^{\rk M - i}$$
be the {\bf characteristic polynomial} of $M$.  The Orlik-Solomon algebra is a quotient of the exterior algebra
over the complex numbers with generators $\{x_e\mid e\in E\}$.  Let $\bOS_{\! M}^*$ be the {\bf reduced Orlik-Solomon algebra} of $M$,
which is defined as the subalgebra of $\OS_{\! M}^*$ generated by $\{x_e -x_{e'}\mid e,e'\in E\}$.
If $\rk M > 0$, then we have a graded algebra isomorphism
\begin{equation}\label{kunneth}\OS_{\! M}^* \cong \bOS_{\! M}^* \otimes \C[x]/\langle x^2\rangle\end{equation}
and therefore a vector space isomorphism 
\begin{equation}\label{kunneth individual}\OS_{\! M}^i \cong \bOS_{\! M}^i \oplus \bOS_{\! M}^{i-1}.\end{equation}
If a finite group $W$ acts on $M$, we obtain induced actions on $\OS^*_{\! M}$ and $\bOS^*_{\! M}$, and the isomorphisms
of Equations \eqref{kunneth} and \eqref{kunneth individual} are $W$-equivariant.

\begin{example}\label{linear}
Suppose that $V$ is a vector space over $\Fq$, and that $\{H_e\mid e\in E\}$ is a collection of hyperplanes
with associated matroid $M$.  Fix a prime $\ell$ that does not divide $q$, and fix an embedding of $\Ql$ into $\C$.
Let $$X := V(\Fqb)\smallsetminus \bigcup_{e\in E} H_e(\Fqb)
\and \P X := \P V(\Fqb)\smallsetminus \bigcup_{e\in E} \P H_e(\Fqb).$$
Then we have canonical isomorphisms
$$\OS_{\! M}^* \cong H^*(X; \Ql)\otimes_{\Ql}\C
\and 
\bOS_{\! M}^* \cong H^*(\P X; \Ql)\otimes_{\Ql}\C,$$
where the cohomology rings are $\ell$-adic \'etale cohomology.
If $\rk M > 0$, then $X\cong \P X \times\gm(\Fqb)$, and Equation \eqref{kunneth} is simply the Kunneth formula.
If $W$ acts on $V$ by linear automorphisms preserving the collection of hyperplanes, we obtain an induced action on $M$,
and these isomorphisms are $W$-equivariant.
\end{example}

\begin{example}
The Boolean matroid $U_{n,0}$ is $S_n$-equivariantly realized by the coordinate hyperplanes in $\Fq^n$.
Its Orlik-Solomon algebra $\OS_{n,0}^*$ is equal to the exterior algebra on $n$ generators, which is isomorphic
to the cohomology of $X_{n,0} \cong \Gm^n(\Fq)$.
As a representation of $S_n$, we have $$\OS_{n,0}^* \cong \Lambda^*\Big( V[n-1,1] \oplus V[n]\Big)
\and \bOS_{n,0}^* \cong \Lambda^*\Big( V[n-1,1]\Big),$$
In particular, this implies that
\begin{equation}\label{reduced uniform}\bOS_{n,0}^i \cong V[n-i,1^i]\end{equation}
for all $i<n$.
\end{example}

\begin{example}
The matroid $U_{n,0}(q)$ is (by definition) $\Gn$-equivariantly realized by the collection of all hyperplanes in $\Fq^n$.
The variety $\P X_{n,0}(q)$ is an example of a Deligne-Lusztig variety for the group $\Gn$.
The techniques developed by Lusztig \cite{Lusztig-Coxeter}
imply that the action of $\Gn$ on the cohomology group of $\P X_{n,0}(q)$ is
given by the unipotent $q$-analogue of Equation \eqref{reduced uniform}:
\begin{equation}\label{reduced q-niform}
\bOS_{n,0}^i(q) \cong V(q)[n-i,1^i]\end{equation}
for all $i<n$.  See \cite[Examples 6.1 and 6.4]{Dudas-EMSS} for a concise and explicit statement of this result.
\end{example}

\begin{example}\label{truncation}
Let $M$ be any matroid, let $d\leq \rk M$ be a natural number, and let $M'$ be the truncation of $M$ to rank $d$.
Then $\OS^*_{\! M'}$ is the truncation of $\OS^*_{\! M}$ to degree $d-1$.  That is, we have a canonical isomorphism
$\bOS_{\! M'}^i \cong \bOS_{\! M}^i$ for all $i\leq d-1$, and $\bOS_{\! M'}^i=0$ for all $i\geq d$.
In the case of Example \ref{linear}, this reflects the fact that 
$\P X'$ is a generic hyperplane section of $\P X$.
In particular, we have
\begin{equation}\label{reduced both}\bOS_{n,m}^i \cong V[n-i,1^i]
\and \bOS_{n,m}^i(q) \cong V(q)[n-i,1^i]\end{equation}
when $i<n-m$, and both groups are zero otherwise.
\end{example}

\section{Kazhdan-Lusztig polynomials}
Let $M$ be a matroid on the ground set $E$ with lattice of flats $L$.  For any $F\in L$, let $M_F$ denote the 
{\bf localization} of $M$ at $F$; this is the matroid on the ground set $F$ whose bases are maximal independent sets of $F$.
Let $M^F$ denote the {\bf contraction} of $M$ at $F$.
If $B$ is a basis for $M_F$, then $M^F$ is obtained from $M$ by 
contracting each element of $B$ and deleting each element of $F\smallsetminus B$.
Equivalently, $M^F$ is a matroid on the ground set $E\smallsetminus F$, and $B'\subset E\smallsetminus F$ is a basis for $M^F$ 
if and only if $B'\cup B$ is a basis for $M$.

\begin{example}
If $F$ is equal to the ground set of $M$ (the maximal flat), then $M_F = M$ and $M^F$ is the matroid of rank zero on the emptyset.
\end{example}

\begin{example}\label{uniform flats}
Proper (that is, non-maximal) flats of $U_{n,m}$ are subsets of $[n]$ of cardinality less than $n-m$.
For such an $F$, $(U_{n,m})_F \cong U_{|F|,0}$ is Boolean, while $U_{n,m}^F \cong U_{n-|F|,m}$.
\end{example}

\begin{example}\label{q-niform flats}
Proper flats of $U_{n,m}(q)$ are collections of linearly independent hyperplanes in $\Fq^n$ of cardinality less than $n-m$.
For such an $F$, $U_{n,m}(q)_F \cong U_{|F|,0}(q)$, while $U_{n,m}(q)^F \cong U_{n-|F|,m}(q)$.
\end{example}

The Kazhdan-Lusztig polynomial of $M$ is characterized by the following three conditions \cite[Theorem 2.2]{EPW}:
\begin{enumerate}
\item If $\rk M = 0$, then $P_M(t) = 1$.
\item If $\rk M > 0$, then $\deg P_M(t) < \tfrac{1}{2}\rk M$.
\item For every $M$, $\displaystyle t^{\rk M} P_M(t^{-1}) = \sum_{F}\chi_{M_F}(t) P_{M^F}(t).$
\end{enumerate}
If $M$ admits the action of a finite group $W$,
the equivariant Kazhdan-Lusztig polynomial is defined by the three analogous conditions, with the 
coefficients of the characteristic polynomial replaced by the graded pieces of the Orlik-Solomon algebra (with corresponding signs),
which are now virtual representations of $W$ rather than integers.  
For every flat $F\in L$, let $W_F\subset W$ denote the stabilizer of $F$.
If $C_{M,W}^i$ is the coefficient of $t^i$ in the $W$-equivariant
Kazhdan-Lusztig polynomial of $M$ and $i < \rk M/2$, we have the following explicit recursive formula \cite[Proposition 2.9]{GPY}:
\begin{equation}\label{ekl-recursion}
C^i_{M,W} = \sum_{\substack{[F]\in L/W\\ 0\leq j\leq \rk F}} (-1)^j\; \Ind_{W_F}^W\!\Big(\OS^{j}_{M_F}
\otimes\; C^{\crk F - i + j}_{M^F,W_F}\Big),\end{equation}
where we take in the sum one flat from each $W$-orbit in $L$.

\begin{example}
Consider the case of the uniform matroid $U_{n,m}$.
Proper flats are subsets of $[n]$ of cardinality less than $n-m$, and the $S_n$-orbit of a flat is determined
by its cardinality.  The stabilizer of a flat of cardinality $k$ is isomorphic to the Young subgroup $S_k\times S_{n-k}\subset S_n$.
Thus Equation \eqref{ekl-recursion} transforms into the following recursion:
\begin{align}\label{uniform-recursion}
\nonumber C^i_{n,m} &= (-1)^i \OS^{i}_{n-m} + \sum_{k=0}^{n-m-1}\sum_{j=0}^k (-1)^j\; \Ind_{S_k\times S_{n-k}}^{S_n}\!\Big(\OS^{j}_{k,0}
\otimes\; C^{n - m - k - i + j}_{n-k,m}\Big)\\
&= (-1)^i \OS^{i}_{n-m} + \sum_{k=0}^{n-m-1}\sum_{j=0}^k (-1)^j\; \OS^{j}_{k,0} * C^{n - m - k - i + j}_{n-k,m},\end{align}
where the first term corresponds to the maximal flat $F = [n]$.
\end{example}

\begin{example}
Consider the case of the $q$-uniform matroid $U_{n,m}(q)$.
Proper flats are collections of linearly independent hyperplanes in $\Fq^n$ of cardinality less than $n-m$, 
and the $\Gn$-orbit of a flat is determined
by its cardinality.  The stabilizer of a flat of cardinality $k$ is isomorphic to the parabolic subgroup $P_{n,k}(q)\subset \Gn$.
Thus Equation \eqref{ekl-recursion} transforms into the $q$-analogue of Equation \eqref{uniform-recursion}:
\begin{align}\label{q-niform-recursion}
\nonumber C^i_{n,m}(q) &= (-1)^i \OS^{i}_{n-m}(q) + \sum_{k=0}^{n-m-1}\sum_{j=0}^k (-1)^j\; \Ind_{P_{n,k}(q)}^{\Gn}\!\Big(\OS^{j}_{k,0}(q)
\otimes\; C^{n - m - k - i + j}_{n-k,m}(q)\Big)\\
&= (-1)^i \OS^{i}_{n-m}(q) + \sum_{k=0}^{n-m-1}\sum_{j=0}^k (-1)^j\; \OS^{j}_{k,0}(q) * C^{n - m - k - i + j}_{n-k,m}(q).\end{align}
\end{example}

\begin{proofqniform}
By Equations \eqref{kunneth individual}, \eqref{reduced uniform}, and \eqref{reduced q-niform}, Equation \eqref{q-niform-recursion}
is precisely the unipotent $q$-analogue of Equation \eqref{uniform-recursion}.  
Then by Theorem \ref{properties}, the first part of Theorem \ref{q-niform} is equivalent to the first part
of Theorem \ref{uniform}.  The second part of Theorem \ref{q-niform} follows from Theorem \ref{properties}(2).
\end{proofqniform}

\bibliography{./symplectic}
\bibliographystyle{amsalpha}

\end{document}